\documentclass{amsart}
\usepackage{graphicx} 
\usepackage{amsfonts}
\usepackage{amssymb}
\usepackage{color}
\usepackage{amsmath}
\usepackage{amsthm}
\usepackage{multicol}
\usepackage{mathtools}
\usepackage{tikz,tikz-cd, tcolorbox}
\usepackage[colorlinks=true,linkcolor=blue,citecolor=blue]{hyperref}
\usepackage{setspace, mathrsfs}
\usepackage{bussproofs}
\theoremstyle{plain}
\newtheorem{theorem}{Theorem}[section]
\newtheorem{lemma}[theorem]{Lemma}

\usepackage{amsaddr}

\theoremstyle{definition}
\newtheorem{definition}[theorem]{Definition}
\newtheorem{remark}[theorem]{Remark}

\begin{document}
\title[Duality for partial orthomodular lattices]{Duality for partial orthomodular lattices}
\author{Joseph McDonald }
\address{Institute of Computer Science, Czech Academy of Sciences}

\email{mcdonald@cs.cas.cz}
\date{May 2026}
\begin{abstract}
  Partial orthomodular lattices are an intermediate class of algebras between ortholattices and orthomodular lattices. In this short note, we obtain a duality for partial orthomodular lattices via a subcategory of the category of spectral spaces, independently of the Axiom of Choice. 
\end{abstract}

\maketitle

\section{Introduction}
Partial orthomodular lattices (pOMLs) were introduced in \cite{leem} as an intermediate class of algebras between the class of ortholattices (OLs) and the class of orthomodular lattices (OMLs), and capture important algebraic aspects of various geometric structures in which the orthomodularity identity fails, such as in the lattice of closed convex cones of $\mathbb{R}$ and $\mathbb{R}^2$. In this short note, we extend the relational representation results of pOMLs given in \cite{leem} to a duality between the category of pOMLs and a certain subcategory of the category of spectral spaces. This is achieved as a certain restriction to the duality results obtained in \cite{mcdonald} for OLs, which generalizes the upper Vietoris space duality for Boolean algebras developed in \cite{bez}. In \cite{mcdonald}, an arbitrary OL is represented via the compact open biorthogonally closed subsets of a spectral space, known as an \emph{upper Vietoris orthospace}, endowed with the underlying first-order relational structure of an \emph{orthoframe}. This is a set equipped with a binary relation that is irreflexive and symmetric. In this work, an arbitrary pOML is represented via the compact open biorthogonally closed subsets of an upper Vietoris orthospace whose underlying relational structure is restricted to that of a $c$-frame. This is an orthoframe extended by a second-order condition quantifying over certain biorthogonally closed subsets. A noteworthy feature of the duality provided here (as well as in \cite{mcdonald}) is that unlike the duality presented for general OLs in \cite{goldblatt1,bimbo} using Stone spaces, our spectral duality for pOMLs obtains in ZF independently of the Axiom of Choice.

\section{Topological representation and duality}
Recall that an OL is a bounded lattice $\langle A;\wedge,\vee,0,1\rangle$ equipped with an order-inverting period two complementation $^\perp\colon A\to A$, known as an \emph{orthocomplementation}. An OML is an OL satisfying $a\leq b\Rightarrow b=a\vee(a^{\perp}\wedge b)$. 
\begin{definition}
    A \emph{partial orthomodular lattice} (\emph{pOML}) is an OL such that  $b\leq a$ and $a=b\vee(a\wedge b^{\perp})$ implies $b=a\wedge(a^{\perp}\vee b)$.  
\end{definition}

   Note that not every OL is a pOML but every OML is a pOML (see \cite[Propositions 2 and 3]{leem}).                                    An \emph{orthoframe} is a pair $\langle X;\perp\rangle$ such that $X$ is a set and $\perp\subseteq X^2$ is irreflexive and symmetric. For any orthoframe $X$ and $U\subseteq X$, let \[U^{\perp}:=\{x\in X:x\perp y\hspace{.1cm}\text{for all}\hspace{.1cm}y\in U\}.\] In \cite{goldblatt2} a subset $U\subseteq X$ is defined as $\perp$-\emph{closed} relative to a subset $V\subseteq X$ if for all $x\in V$, if 
    $x\not\in U$, there exists $y\in V$ such that $y\perp U$ and $x\not\perp y$. We denote the sets that are $\perp$-closed relative to the entire space $X$ by $\mathcal{B}(X)$. Note that $\mathcal{B}(X)$ is precisely those subsets that are biorthogonally closed in the sense that $\mathcal{B}(X)=\{U\subseteq X:U=U^{\perp\perp}\}$.     
\begin{definition}
    A $c$\emph{-frame} is a triple $\langle X;\perp,\Omega(X)\rangle$ such that $\langle X;\perp\rangle$ is an orthoframe and $\Omega(X)\subseteq\mathcal{B}(X)$ is closed under $\cap$, $^{\perp}$, and satisfies the following condition: if $V\subseteq U$ and $U^{\perp}$ is $\perp$-closed in $V^{\perp}$, then $V$ is $\perp$-closed in $U$.
\end{definition}
\begin{lemma}\label{lemma 2.3}
    If $X$ is a c-frame, then $U=V\cap(V^{\perp}\sqcup U)$ implies that $U$ is $\perp$-closed in $V$ for all $U,V\in\Omega(X)$ where $U\sqcup V:=(U\cup V)^{\perp\perp}$.  
\end{lemma}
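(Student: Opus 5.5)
The plan is a direct element-chasing argument using only the biorthogonal-closure description of $\mathcal{B}(X)$ and the fact that $\Omega(X)\subseteq\mathcal{B}(X)$. I do not expect to need the second-order $c$-frame condition itself. Fix $U,V\in\Omega(X)$ with $U=V\cap(V^{\perp}\sqcup U)$. We must show: for every $x\in V$ with $x\notin U$ there is $y\in V$ with $y\perp U$ (that is, $y\in U^{\perp}$) and $x\not\perp y$.

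Step one is to locate $x$ outside the join. Since $x\in V$ and $x\notin U=V\cap(V^{\perp}\sqcup U)$, necessarily $x\notin V^{\perp}\sqcup U=(V^{\perp}\cup U)^{\perp\perp}$. Unwinding the definition of $^{\perp}$, this means there is some $y\in(V^{\perp}\cup U)^{\perp}$ with $x\not\perp y$.

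Step two is to check that this $y$ witnesses $\perp$-closedness. We use the elementary identity $(A\cup B)^{\perp}=A^{\perp}\cap B^{\perp}$, which is immediate from the definition of $^{\perp}$. It gives $y\in V^{\perp\perp}\cap U^{\perp}$. Since $V\in\Omega(X)\subseteq\mathcal{B}(X)$, we have $V^{\perp\perp}=V$, hence $y\in V$. Moreover $y\in U^{\perp}$, i.e.\ $y\perp U$, and $x\not\perp y$ by choice. This is exactly the requirement that $U$ be $\perp$-closed in $V$.

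There is no real obstacle. The only point needing care is the first step, which uses the hypothesis in full: $x\in V$ together with $x\notin U$ forces $x$ out of the biorthogonal join. After that, the identity for $(A\cup B)^{\perp}$ and $V=V^{\perp\perp}$ finish the argument. In particular the lemma holds in any orthoframe, for any $U,V\in\mathcal{B}(X)$.
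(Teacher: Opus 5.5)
Your proof is correct and is essentially the paper's argument: both take $x\in V\setminus U$, conclude $x\notin V^{\perp}\sqcup U$, and extract a witness $y\in(V^{\perp}\cup U)^{\perp}=V\cap U^{\perp}$ with $x\not\perp y$; the paper simply writes $V^{\perp}\sqcup U$ directly as $(V\cap U^{\perp})^{\perp}$. Your observation that the $c$-frame condition is never used applies to the paper's proof as well, and in fact only $V=V^{\perp\perp}$ is needed.
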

\begin{proof}
    Take $x\in V$ with $x\not\in U$. Since $U=V\cap(V^{\perp}\sqcup U)$, we have $x\not\in V\cap(V^{\perp}\sqcup U)$. Moreover, since $x\in V$, it follows that $x\not\in V^{\perp}\sqcup U=(V\cap U^{\perp})^{\perp}$ and thus there exists some $y\in V\cap U^{\perp}$ such that $x\not\perp y$. Hence $y\in V$ and $y\in U^{\perp}$ which implies $y\perp U$. Therefore $U$ is $\perp$-closed in $V$.
    \end{proof}
\begin{lemma}\label{c-frame to poml}
    If $X$ is a c-frame, then $\langle\Omega(X);\cap,\sqcup,^{\perp},\emptyset,X\rangle$ is a pOML. 
\end{lemma}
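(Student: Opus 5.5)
The plan is to verify two things: that $\Omega(X)$ is an ortholattice under the stated operations, and that it satisfies the pOML implication. For the second, I will chain Lemma \ref{lemma 2.3} with the defining condition of a $c$-frame.

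\emph{Step 1: $\Omega(X)$ is an OL.} I use the standard fact that $\mathcal{B}(X)$ is a complete ortholattice with meet $\cap$, join $\sqcup$ and orthocomplement $^{\perp}$. This rests on the Galois-connection identities $U\subseteq U^{\perp\perp}$, $U^{\perp\perp\perp}=U^{\perp}$ and antitonicity of $^{\perp}$, together with $U\cap U^{\perp}=\emptyset$ (by irreflexivity) and $U^{\perp\perp}=U$ for $U\in\mathcal{B}(X)$. In particular, the De Morgan law gives
\[U\sqcup V=(U^{\perp}\cap V^{\perp})^{\perp}.\]
Hence closure of $\Omega(X)$ under $\cap$ and $^{\perp}$ yields closure under $\sqcup$. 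Assuming $\Omega(X)\neq\emptyset$ (implicit in the definition), pick any $U\in\Omega(X)$. Then $\emptyset=U\cap U^{\perp}\in\Omega(X)$ and $X=\emptyset^{\perp}\in\Omega(X)$. So $\Omega(X)$ is a sub-ortholattice of $\mathcal{B}(X)$.

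\emph{Step 2: reduce the pOML identity to the $c$-frame condition.} Let $V,U\in\Omega(X)$ with $V\subseteq U$ (so $b=V$, $a=U$), and assume $U=V\sqcup(U\cap V^{\perp})$. Applying $^{\perp}$ and De Morgan gives
\[U^{\perp}=V^{\perp}\cap(U\cap V^{\perp})^{\perp}=V^{\perp}\cap(V\sqcup U^{\perp}).\]
Since $V^{\perp\perp}=V$, this is exactly the hypothesis of Lemma \ref{lemma 2.3} with $U^{\perp}$ and $V^{\perp}$ in the roles of $U$ and $V$. Lemma \ref{lemma 2.3} therefore shows that $U^{\perp}$ is $\perp$-closed in $V^{\perp}$. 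As $V\subseteq U$, the $c$-frame condition now yields that $V$ is $\perp$-closed in $U$.

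\emph{Step 3: from relative closedness to the identity.} I must show $V=U\cap(U^{\perp}\sqcup V)$. The inclusion $\subseteq$ is immediate, since $V\subseteq U$ and $V\subseteq U^{\perp}\sqcup V$. For $\supseteq$, take $x\in U$ with $x\notin V$. Relative closedness gives $y\in U$ with $y\perp V$ and $x\not\perp y$. Then
\[y\in U\cap V^{\perp}=(U^{\perp}\sqcup V)^{\perp},\]
so $x\not\perp y$ forces $x\notin U^{\perp}\sqcup V$. Hence $x\notin U\cap(U^{\perp}\sqcup V)$.

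The main obstacle is Step 2, specifically spotting the dualization. The hypothesis must be orthocomplemented so that it takes the shape required by Lemma \ref{lemma 2.3}, applied to the complements $U^{\perp}$ and $V^{\perp}$ rather than to $U$ and $V$. Everything else is routine manipulation of $^{\perp}$.
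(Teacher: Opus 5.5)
Your proposal is correct and follows the paper's proof essentially step for step. You dualize the hypothesis to $U^{\perp}=V^{\perp}\cap(V\sqcup U^{\perp})$, apply Lemma \ref{lemma 2.3} to $U^{\perp},V^{\perp}$, invoke the $c$-frame condition, and then use the witness $y\in U\cap V^{\perp}=(U^{\perp}\sqcup V)^{\perp}$ to obtain the nontrivial inclusion. Your Step 1 is slightly more careful than the paper, which does not spell out why $\Omega(X)$ is closed under $\sqcup$ and contains $\emptyset$ and $X$.
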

    \begin{proof}
Since $\Omega(X)\subseteq\mathcal{B}(X)$, it follows that $\langle\Omega(X);\cap,\sqcup,^{\perp},\emptyset,X\rangle$ forms an OL. It remains to verify that it is a pOML. Assume $U,V\in\Omega(X)$ with $V\subseteq U$ and $U=V\sqcup(U\cap V^{\perp})$. We first show $U^{\perp}$ is $\perp$-closed in $V^{\perp}$. Since $^{\perp}$ is an orthocomplementation, De Morgan's identities give $U^{\perp}=V^{\perp}\cap(V\sqcup U^{\perp})$. Then Lemma \ref{lemma 2.3} implies $U^{\perp}$ is $\perp$-closed in $V^{\perp}$. Since $X$ is a c-frame, it follows that $V$ is $\perp$-closed in $U$. To show $V=U\cap(U^{\perp}\sqcup V)$, we demonstrate the $U\cap(U^{\perp}\sqcup V)\subseteq V$ inclusion as the other inclusion is trivial. Hence assume $x\not\in V$. Since $V$ is $\perp$-closed in $U$, there exists $y\in U$ such that $x\not\perp y$ and $y\perp V$. Moreover, we have $y\in U\cap V^{\perp}$. Now suppose for the sake of contradiction that $x\in U\cap(U^{\perp}\sqcup V)$ i.e., $x\in U\cap(U\cap V^{\perp})^{\perp}$. Then $x\perp z$ for every $z\in U\cap V^{\perp}$ but this contradicts our hypothesis that $x\not\perp y$ for some $y\in U\cap V^{\perp}$. Hence $x\not\in U\cap(U^{\perp}\sqcup V)$ so $U\cap(U^{\perp}\sqcup V)\subseteq V$.  
\end{proof} 
\begin{definition}
    Let $A$ be a pOML. Then define $\mathcal{F}(A)=\langle X_A;\perp_A,\beta_A,\tau_A\rangle$ by: 
    \begin{enumerate}
        \item $X_A$ is the collection of all non-empty proper filters of $A$; 
        \item $x\perp_A y$ iff there exists $a\in A$ such that $a\in x$ and $a^{\perp}\in y$; 
        \item $\beta_A=\{\phi(a):a\in A\}$ where $\phi(a)=\{x\in X_A:a\in x\}$; 
        \item $\tau_A$ is the topology on $X_A$ generated by the basis $\beta_A$. 
    \end{enumerate}
\end{definition}
For a topological space $X$, let $\mathcal{C}(X)$ be the compact subsets of $X$, let $\mathcal{O}(X)$ be the open subsets of $X$, and let $\mathcal{K}(X):=\mathcal{C}(X)\cap\mathcal{O}(X)$. Recall that $X$ is a \emph{spectral space} if $X$ is compact, $T_0$, coherent, and sober (see \cite[Section 3.2]{mcdonald}). Since every spectral space $X$ is $T_0$, one can define a specialization order by $x\leqslant y$ iff $x\in U$ implies $y\in U$ for all $U\in\mathcal{O}(X)$, that is a partial order.  
\begin{lemma}\label{spectral space}
     $\langle X_A;\tau_A\rangle$ is a spectral space whose specialization order is $\subseteq$.  
\end{lemma}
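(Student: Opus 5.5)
The plan is to verify the four defining properties of a spectral space directly from the basis $\beta_A$. Everything rests on two elementary facts about $\phi$. The first is that $\phi(a)\cap\phi(b)=\phi(a\wedge b)$, $\phi(1)=X_A$ and $\phi(0)=\emptyset$, because filters are upward closed and closed under $\wedge$, and proper filters omit $0$. The second is that for $a\neq 0$ the principal filter ${\uparrow}a$ is a non-empty proper filter lying in $\phi(a)$, and $\phi(a)\subseteq\phi(b)$ iff $a\leq b$. No maximality or prime-filter arguments are needed, so no form of choice enters.

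\textbf{Compactness, coherence, $T_0$, specialization order.}
\begin{enumerate}
\item Each $\phi(a)$ is compact. For $a=0$ this is trivial. For $a\neq0$, take any cover of $\phi(a)$ by basic opens $\phi(b_i)$. Then ${\uparrow}a\in\phi(b_i)$ for some $i$, so $a\leq b_i$ and already $\phi(a)\subseteq\phi(b_i)$. In particular $X_A=\phi(1)$ is compact.
\item The compact opens are exactly the finite unions of basic opens. Such unions are closed under binary intersection by distributing and using $\phi(a)\cap\phi(b)=\phi(a\wedge b)$. Together with the fact that they form a basis, this gives coherence.
\item For $T_0$: if $x\neq y$, then some $a$ lies in one filter but not the other, and $\phi(a)$ separates them.
\item Since $\beta_A$ is a basis, $x\leqslant y$ iff every $\phi(a)$ containing $x$ contains $y$, i.e.\ iff $a\in x$ implies $a\in y$, i.e.\ iff $x\subseteq y$. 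Consequently every closed set is a $\subseteq$-down-set, and the closure of a point $y$ is ${\downarrow}y=\{z: z\subseteq y\}$.
\end{enumerate}

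\textbf{Sobriety (the main step).} Let $C$ be a non-empty irreducible closed set. I would take the candidate generic point
\[y:=\{a\in A:\phi(a)\cap C\neq\emptyset\},\]
which is the same as $\bigcup C$.
\begin{enumerate}
\item $y$ is upward closed, because $a\leq b$ gives $\phi(a)\subseteq\phi(b)$.
\item $y$ is proper, because $\phi(0)=\emptyset$, and non-empty, because $1\in y$ as $C\neq\emptyset$.
\item $y$ is closed under $\wedge$, and this is where irreducibility is used. If $\phi(a)\cap C$ and $\phi(b)\cap C$ are both non-empty, irreducibility of $C$ gives $\phi(a)\cap\phi(b)\cap C=\phi(a\wedge b)\cap C\neq\emptyset$.
\end{enumerate}
So $y\in X_A$, and every $z\in C$ satisfies $z\subseteq y$, so $C\subseteq{\downarrow}y$.

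Conversely, suppose $y\notin C$. Since $X_A\setminus C$ is open, there is a basic $\phi(a)\ni y$ disjoint from $C$. But $a\in y$ means $\phi(a)\cap C\neq\emptyset$, a contradiction. Hence $y\in C$, and since $C$ is a down-set, ${\downarrow}y\subseteq C$. Thus $C={\downarrow}y=\overline{\{y\}}$, and uniqueness of the generic point follows from $T_0$.

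I expect this sobriety step to be the only non-routine part. The key point is that irreducibility is exactly what makes $\bigcup C$ a filter, rather than reaching for a prime or maximal filter.
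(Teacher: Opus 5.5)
Your proof is correct, but it takes a different route from the paper. The paper gives no direct argument: it observes that a pOML is in particular an OL, and that $\langle X_A;\tau_A\rangle$ depends only on the underlying lattice. The statement is therefore inherited verbatim from the ortholattice case \cite[Proposition 3.4.1]{mcdonald}.

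You instead verify compactness, $T_0$, coherence and sobriety from scratch. You use only the facts $\phi(a)\cap\phi(b)=\phi(a\wedge b)$, $\phi(0)=\emptyset$ and $\phi(1)=X_A$, together with the observation that ${\uparrow}a$ is the least point of $\phi(a)$ for $a\neq 0$. That observation makes every open cover of $\phi(a)$ have a one-element subcover. You then obtain the generic point of an irreducible closed set $C$ as the filter $\bigcup C$, with irreducibility supplying closure under $\wedge$.

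What your route buys is self-containment and transparency. It shows that neither the orthocomplement nor the partial orthomodularity axiom plays any role: the argument works for the filter space of any bounded lattice, indeed any bounded meet-semilattice. It also makes the choice-freeness asserted in the paper's Remark visible, since no prime-filter or maximality argument is ever used. What the paper's route buys is brevity and modularity: once the OL case is established, the pOML case costs nothing.

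If the cited definition of sobriety is phrased via completely prime filters $P$ of opens rather than irreducible closed sets, the filter $\{a:\phi(a)\in P\}$ coincides with your $\bigcup C$. Passing between the two formulations is itself choice-free, so your verification covers either convention.
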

\begin{proof}
Since every pOML is an OL, the result is an immediate application of \cite[Proposition 3.4.1]{mcdonald}.  
\end{proof}
\begin{remark}
    It was shown in \cite[Proposition 3]{goldblatt1} that for any OL $A$, the Stone space $\langle X_A;\tau_A^*\rangle$ is compact where $\tau_A^*\subseteq\wp(X_A)$ is the topology generated by the subbasis $\bigcup_{a\in A}\{\phi(a),\hspace{.1cm}X_A\setminus\phi(a)\}$. By \cite[Proposition 3.3.1]{mcdonald}, the compactness of $\langle X_A;\tau_A^*\rangle$ requires the Axiom of Choice. As the proof of compactness of $\langle X_A;\tau_A\rangle$ follows by \cite[Proposition 3.4.1]{mcdonald}, it obtains in ZF alone.     
\end{remark}
\begin{lemma}\label{lemma 2.8}
    In any OL, we have $b\wedge(b^{\perp}\vee a)\leq a$ iff $\phi(a)$ is $\perp$-closed in $\phi(b)$. 
\end{lemma}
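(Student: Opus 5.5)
We prove both directions directly from the definitions of $\phi$, $\perp_A$ and relative $\perp$-closedness. Throughout, $X_A$ consists of non-empty proper filters, and for $c\in A$ we write ${\uparrow}c$ for the principal filter generated by $c$. This filter lies in $X_A$ exactly when $c\neq 0$. We also record a reformulation: $\phi(a)$ is $\perp$-closed in $\phi(b)$ iff for every filter $x$ with $b\in x$ and $a\notin x$ there is a filter $y$ with $b\in y$, $y\perp_A\phi(a)$ and $x\not\perp_A y$.

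($\Rightarrow$) Assume $b\wedge(b^{\perp}\vee a)\leq a$ and take $x\in\phi(b)\setminus\phi(a)$. The natural witness is $y:={\uparrow}(b\wedge a^{\perp})$. Then $b\in y$. Moreover, $y\perp_A z$ for every $z\in\phi(a)$, via the element $a^{\perp}\in y$ and $a^{\perp\perp}=a\in z$.

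We must check that $b\wedge a^{\perp}\neq 0$ and that $x\not\perp_A y$. Suppose instead that some $c$ has $c\in x$ and $c^{\perp}\in y$, i.e. $b\wedge a^{\perp}\leq c^{\perp}$. Then $c\leq (b\wedge a^{\perp})^{\perp}=b^{\perp}\vee a$. Since $x$ is a filter containing both $b$ and $c$, it contains $b\wedge(b^{\perp}\vee a)$, and hence contains $a$ by the hypothesis, a contradiction. The same argument with $c=1$ shows that $b\wedge a^{\perp}=0$ would force $a\in x$. Hence $y$ is proper, so $y\in\phi(b)$ and $x\not\perp_A y$, as required.

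($\Leftarrow$) Assume $\phi(a)$ is $\perp$-closed in $\phi(b)$, and suppose for contradiction that $d:=b\wedge(b^{\perp}\vee a)\not\leq a$. Then $d\neq 0$, so $x:={\uparrow}d\in X_A$, and $x\in\phi(b)\setminus\phi(a)$. By hypothesis there is $y\in\phi(b)$ with $y\perp_A\phi(a)$ and $x\not\perp_A y$. This step is the main obstacle: we must extract from $y\perp_A\phi(a)$ the concrete fact $a^{\perp}\in y$. To do so, apply the condition to $z={\uparrow}a$, which lies in $\phi(a)$ provided $a\neq 0$. Then $y\perp_A{\uparrow}a$ gives some $c$ with $c^{\perp}\in y$ and $a\leq c$, hence $c^{\perp}\leq a^{\perp}$, and so $a^{\perp}\in y$. (If $a=0$, then $a^{\perp}=1\in y$ trivially.) Since also $b\in y$, we get $b\wedge a^{\perp}\in y$.

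Now use the witness $c:=b^{\perp}\vee a$. We have $d\leq c$, so $c\in x$. Also $c^{\perp}=b\wedge a^{\perp}\in y$. Thus $x\perp_A y$, contradicting $x\not\perp_A y$. Therefore $d\leq a$.

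Only the orthocomplement laws (De Morgan and involution) and the filter properties were used, so the argument holds in any OL and requires no choice.
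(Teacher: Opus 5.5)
Your proof is correct, but it follows a different route from the paper. The paper gives no direct argument. It obtains the lemma by citing Goldblatt's semantic analysis of orthologic (Lemma 6.3 and Theorem 3.7 of \cite{goldblatt2}), together with the soundness and completeness of orthologic with respect to OLs. Unpacked, that route combines two facts:
\begin{itemize}
\item In any orthoframe, $U$ is $\perp$-closed in $V$ iff $V\cap(V^{\perp}\sqcup U)\subseteq U$ (cf. the argument of Lemma \ref{lemma 2.3}).
\item $\phi$ is an OL embedding of $A$ into $\mathcal{B}(X_A)$, so $b\wedge(b^{\perp}\vee a)\leq a$ iff $\phi(b)\cap(\phi(b)^{\perp}\sqcup\phi(a))\subseteq\phi(a)$.
\end{itemize}

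You instead work directly from the definitions of $\phi$, $\perp_A$ and relative $\perp$-closedness, using principal filters as witnesses. In the forward direction the witness is ${\uparrow}(b\wedge a^{\perp})$, and in the converse it is ${\uparrow}d$ and ${\uparrow}a$. In effect you inline exactly the pieces of the representation that are needed: order-reflection of $\phi$, and $\phi(a)^{\perp}\subseteq\phi(a^{\perp})$.

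The paper's route is shorter and shows the structural reason: the lemma is the image, under the representation, of a general orthoframe identity. However, it relies on external results about orthologic and its canonical model, which have to be transferred to the filter frame of an arbitrary OL. Your argument is self-contained and applies verbatim to any OL. It also makes the choice-freeness the paper emphasizes immediately evident, since only principal filters are ever constructed.
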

\begin{proof}
    This is an application of \cite[Lemma 6.3]{goldblatt2}, \cite[Theorem 3.7]{goldblatt2}, and the fact that OLs provide sound and complete algebraic models for orthologic.      
\end{proof}
\begin{lemma}\label{poml to c-frame}
    $\langle X_A;\perp_A,\beta_A\rangle$ is a c-frame. 
\end{lemma}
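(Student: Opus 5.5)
We need to check three things: that $\langle X_A;\perp_A\rangle$ is an orthoframe, that $\beta_A\subseteq\mathcal{B}(X_A)$ is closed under $\cap$ and $^{\perp}$, and that the $c$-frame condition holds. We work with the standard facts about $\phi$ that hold for any OL: $\phi(a\wedge b)=\phi(a)\cap\phi(b)$, $\phi(a^{\perp})=\phi(a)^{\perp}$, and $\phi(a)\subseteq\phi(b)$ iff $a\leq b$. These are \cite{goldblatt1}-style facts, and we can either cite them through \cite{mcdonald} or verify them directly.

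\emph{Orthoframe and closure properties.} Symmetry of $\perp_A$ follows from $a^{\perp\perp}=a$: if $a\in x$ and $a^{\perp}\in y$, then taking $a^{\perp}$ as the witness gives $y\perp_A x$. For irreflexivity, note that $a,a^{\perp}\in x$ would put $a\wedge a^{\perp}=0$ in $x$, which is impossible because $x$ is proper. Closure of $\beta_A$ under $\cap$ is immediate, since a filter contains $a\wedge b$ iff it contains both $a$ and $b$.

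For $\phi(a)^{\perp}=\phi(a^{\perp})$, one inclusion is easy: if $a^{\perp}\in x$ and $a\in y$, then $x\perp_A y$. For the converse, take the principal filter ${\uparrow}a$ when $a\neq0$. If $x\perp_A{\uparrow}a$, there is some $c$ with $c\in x$ and $c^{\perp}\geq a$. Then $c\leq a^{\perp}$, so $a^{\perp}\in x$. The case $a=0$ is handled separately: then $\phi(0)=\emptyset$ and $\emptyset^{\perp}=X_A=\phi(1)$. Applying this identity twice gives $\phi(a)^{\perp\perp}=\phi(a)$, so $\beta_A\subseteq\mathcal{B}(X_A)$.

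\emph{The $c$-frame condition.} Let $V=\phi(b)\subseteq U=\phi(a)$, so that $b\leq a$, and suppose $U^{\perp}=\phi(a^{\perp})$ is $\perp$-closed in $V^{\perp}=\phi(b^{\perp})$. Lemma \ref{lemma 2.8}, applied with $a^{\perp}$ in place of $a$ and $b^{\perp}$ in place of $b$, gives
\[
b^{\perp}\wedge(b\vee a^{\perp})\leq a^{\perp}.
\]
Since $b\leq a$ gives $a^{\perp}\leq b^{\perp}$ and $a^{\perp}\leq b\vee a^{\perp}$, the reverse inequality also holds, so $a^{\perp}=b^{\perp}\wedge(b\vee a^{\perp})$. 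Taking orthocomplements and using De Morgan yields $a=b\vee(a\wedge b^{\perp})$.

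This is exactly the hypothesis of the pOML axiom, so $b=a\wedge(a^{\perp}\vee b)$. In particular $a\wedge(a^{\perp}\vee b)\leq b$, and Lemma \ref{lemma 2.8}, with $a,b$ swapped in its roles, gives that $\phi(b)=V$ is $\perp$-closed in $\phi(a)=U$. This is what the $c$-frame condition requires.

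The main obstacle is the order-embedding and orthocomplement-preservation facts for $\phi$, especially $\phi(a)^{\perp}\subseteq\phi(a^{\perp})$. These rest on using principal filters, and since $X_A$ excludes the improper filter, the edge cases $a=0$ and $a=1$ need separate handling. Everything else is a direct translation through Lemma \ref{lemma 2.8} together with the pOML axiom.
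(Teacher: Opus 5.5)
Your proof is correct and follows the paper's route for the key step: you translate the hypothesis through Lemma \ref{lemma 2.8} (applied to $a^{\perp},b^{\perp}$), use $b\leq a$ to upgrade the inequality to $a=b\vee(a\wedge b^{\perp})$, apply the pOML axiom, and translate back via Lemma \ref{lemma 2.8}. The only difference is that you prove directly the orthoframe axioms and the identities $\phi(a)\cap\phi(b)=\phi(a\wedge b)$ and $\phi(a)^{\perp}=\phi(a^{\perp})$ (hence $\beta_A\subseteq\mathcal{B}(X_A)$ with the required closure properties), whereas the paper cites Goldblatt for the orthoframe axioms and uses the other facts only implicitly.
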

\begin{proof}
    The proof that $\langle X_A;\perp_A\rangle$ is a orthoframe follows from \cite[Proposition 2]{goldblatt1}. It suffices to show that for all $a,b
    \in A$, if $\phi(b)\subseteq\phi(a)$ and $\phi(a)^{\perp}$ is $\perp$-closed in $\phi(b)^{\perp}$, then $\phi(b)$ is $\perp$-closed in $\phi(a)$. First note that the definition of $\phi$ applied to our hypothesis that $\phi(b)\subseteq\psi(a)$ gives $b\leq a$. Applying Lemma \ref{lemma 2.8} to our hypothesis that $\phi(a)^{\perp}$ is $\perp$-closed in $\phi(b)^{\perp}$ yields $b^{\perp}\wedge(b^{\perp\perp}\vee a^{\perp})\leq a^{\perp}$. Since $^{\perp}$ is an order-inverting involution, $a^{\perp\perp}=a\leq(b^{\perp}\wedge(b^{\perp\perp}\vee a^{\perp}))^{\perp}$. De Morgan's identities and the fact that $^{\perp}$ is an involution then guarantees that  \[(b^{\perp}\wedge(b^{\perp\perp}\vee a^{\perp}))^{\perp}=b\vee(b^{\perp}\wedge a)\] so $a\leq b\vee(b^{\perp}\wedge a)$. Clearly $b^{\perp}\wedge a\leq a$ which together with the fact that $b\leq a$ and the anti-symmetry of $\leq$ yields $a=b\vee(b^{\perp}\wedge a)$. Since $A$ is a pOML, we have $b=a\wedge(a^{\perp}\vee b)$ so $a\wedge(a^{\perp}\vee b)\leq b$. Lemma \ref{lemma 2.8} implies $\phi(b)$ is $\perp$-closed in $\phi(a)$.  
\end{proof}
For a $c$-frame $X$ equipped with a topology, we define \[\mathcal{K}\Omega(X):=\mathcal{K}(X)\cap\Omega(X).\] 
\begin{theorem}\label{isomorphism}
    Every pOML $A$ is isomorphic to $\mathcal{K}\Omega(\mathcal{F}(A))$.  
\end{theorem}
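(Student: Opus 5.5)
The plan is to show that $\phi\colon A\to\mathcal{K}\Omega(\mathcal{F}(A))$, $a\mapsto\phi(a)$, is an isomorphism of pOMLs. Here $\mathcal{K}\Omega(\mathcal{F}(A))$ is taken with the operations $\cap$, $\sqcup$, $^{\perp}$, $\emptyset$, $X_A$, and $\Omega(\mathcal{F}(A))=\beta_A$ as in Lemma \ref{poml to c-frame}. The argument splits into three parts: identify the carrier, show $\phi$ is an order-embedding onto it, and show $\phi$ preserves the operations. Since $\phi$ is then a bijection preserving and reflecting order and complement, the lattice operations are automatically preserved. In particular, the image is closed under $\cap$, $\sqcup$ and $^{\perp}$, so by Lemma \ref{c-frame to poml} (or directly by transport of structure) it is a pOML isomorphic to $A$.

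\textbf{Step 1: the carrier.} First I would verify that $\mathcal{K}\Omega(\mathcal{F}(A))=\beta_A$, which amounts to $\beta_A\subseteq\mathcal{K}(X_A)$. Each $\phi(a)$ is open, being a basic open set. For compactness, note that $\phi(a)$ has a least element in the specialization order $\subseteq$ of Lemma \ref{spectral space}: the principal filter ${\uparrow}a$, which lies in $X_A$ whenever $a\neq 0$, while $\phi(0)=\emptyset$ because filters are proper. Any open cover of $\phi(a)$ has a member containing ${\uparrow}a$. Open sets are up-sets in the specialization order, so that member already contains all of $\phi(a)$. Hence $\phi(a)$ is compact, and this argument avoids any use of Choice. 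Alternatively I could cite \cite[Proposition 3.4.1]{mcdonald}, where the compact opens of $\tau_A$ are identified.

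\textbf{Step 2: order-embedding.} We have $a\le b$ iff $\phi(a)\subseteq\phi(b)$. The forward direction is immediate from filters being upward closed. For the converse, if $a\not\le b$ with $a\neq 0$, then ${\uparrow}a\in\phi(a)\setminus\phi(b)$; and $a=0$ gives $a\le b$ trivially. This yields injectivity, so $\phi$ is an order-isomorphism onto $\beta_A$, and hence onto $\mathcal{K}\Omega(\mathcal{F}(A))$ by Step 1.

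\textbf{Step 3: preservation of operations.} Meets are preserved since $\phi(a\wedge b)=\phi(a)\cap\phi(b)$ by the filter property. The constants are preserved: $\phi(0)=\emptyset$, and $\phi(1)=X_A$ since filters are non-empty. The main step, and the one I expect to be the obstacle, is $\phi(a^{\perp})=\phi(a)^{\perp}$. The inclusion $\subseteq$ is direct: if $a^{\perp}\in x$ and $a\in y$, then $x\perp_A y$. For $\supseteq$, suppose $a^{\perp}\notin x$. I would take $y={\uparrow}a$; this is proper because $a\neq0$ (if $a=0$ then $a^\perp=1\in x$). Then $x\perp_A y$ would require some $c\in x$ with $c^{\perp}\ge a$, i.e.\ $c\le a^{\perp}$, forcing $a^{\perp}\in x$, a contradiction. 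So $x\notin\phi(a)^{\perp}$. This uses only principal filters, consistent with the ZF claim.

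Finally, joins are preserved because $\phi(a\vee b)=\phi((a^{\perp}\wedge b^{\perp})^{\perp})=(\phi(a)^{\perp}\cap\phi(b)^{\perp})^{\perp}=(\phi(a)\cup\phi(b))^{\perp\perp}=\phi(a)\sqcup\phi(b)$.
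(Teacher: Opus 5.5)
Your proof is correct. The map and the injectivity argument via principal filters ${\uparrow}a$ are the same as in the paper; you also treat $a=0$, which the paper leaves implicit. The work, however, is distributed differently.

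You take $\Omega(\mathcal{F}(A))=\beta_A$ literally from Lemma~\ref{poml to c-frame}. Surjectivity is then automatic, and the substance becomes $\beta_A\subseteq\mathcal{K}(X_A)$. You get this from ${\uparrow}a$ being the least element of $\phi(a)$, and you add an explicit principal-filter proof of $\phi(a^{\perp})=\phi(a)^{\perp}$.

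The paper instead cites \cite[Theorem 3.4.2]{mcdonald} for compactness and for preservation of the operations. It spends its effort on surjectivity: it writes a compact open $U$ as a finite union $\bigcup_{i=1}^n\phi(a_i)$ and uses $U=U^{\perp\perp}$ to get $U=\phi(\bigvee_{i=1}^n a_i)$. That argument uses only $U\in\mathcal{B}(X_A)$, not $U\in\beta_A$. So it yields the stronger identity $\mathcal{K}(X_A)\cap\mathcal{B}(X_A)=\beta_A$. In other words, the dual lattice is determined by the topology and $\perp_A$ alone, as in the ortholattice duality of \cite{mcdonald}, rather than by the designated family $\Omega$.

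Your write-up does not establish this stronger identity. It follows in two lines, though, from the fact that $\beta_A$ is a basis, compactness, and your join computation in Step~3. In exchange, your version is self-contained and makes the choice-free character of the compactness and complement steps explicit, which the paper only asserts by citation.
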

\begin{proof}
    The map $\phi(a)=\{x\in X_A:a\in x\}$ provides an isomorphism. The proof is a simple application of \cite[Theorem 3.4.2]{mcdonald}, however we outline the proof that $\phi$ is a bijection. For any $c\in A$, let ${\uparrow}c=\{d\in A:c\leq d\}$. Clearly ${\uparrow}c$ is a proper filter whenever $c\not=0$. For injectivity, assume $a\not\leq b$. Then $a\in{\uparrow}a$ but $b\not\in{\uparrow}a$ so ${\uparrow}a\in\phi(a)$ but ${\uparrow}a\not\in\phi(b)$ and hence $\phi(a)\not\subseteq\phi(b)$. For surjectivity, suppose $U\in\mathcal{K}\Omega(\mathcal{F}(A))$ so that $U=\bigcup^n_{i=1}\phi(a_i)$ for $a_1,\dots a_n\in A$. Since $U\in\Omega(\mathcal{F}(A))$, it can be easily shown that $U=U^{\perp\perp}$ and hence we obtain \[U=U^{\perp\perp}=\biggl(\bigcup_{i=1}^n\phi(a_i)\biggl)^{\perp\perp}=\bigvee^n_{i=1}\phi(a_i)=\phi\biggl(\bigvee_{i=1}^na_i\biggl).\] Therefore $U$ is in the image of $\phi$ and hence $\phi$ is surjective. 
\end{proof}
 Below are a restriction to the upper Vietoris orthospaces (the choice-free duals of the OLs) introduced in \cite{mcdonald}. For the following definition, let \[\mathcal{K}\Omega_X(x):=\{U\in\mathcal{K}\Omega(X):x\in U\}.\]  
\begin{definition}\label{uvc-space}
    An \emph{upper Vietoris c-space} (\emph{UVC-space}) is a c-frame $X$ equipped with a $T_0$-topology $\tau\subseteq\wp(X)$ satisfying the following conditions: 
    \begin{enumerate}
        \item $\mathcal{K}\Omega(X)$ is closed under $\cap$ and $^{\perp}$ and forms a basis for $X$;
        \item if $x\perp y$, there exists $U\in\mathcal{K}\Omega(X)$ such that $x\in U$ and $y\in U^{\perp}$; 
        \item every proper filter in $\mathcal{K}\Omega(X)$ is of the form $\mathcal{K}\Omega_X(x)$ for some $x\in X$. 
    \end{enumerate}
\end{definition}
\begin{lemma}\label{functors}
    If $X$ is a UVC-space, then $\mathcal{G}(X)=\langle\mathcal{K}\Omega(X);\cap,\sqcup,^{\perp},\emptyset,X\rangle$ is a pOML. If $A$ is a pOML, then $\mathcal{F}(A)=\langle X_A;\perp_A,\beta_A,\tau_A\rangle$ is a UVC-space. 
\end{lemma}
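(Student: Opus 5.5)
The lemma has two halves, and I would prove each by reducing to results already in the paper.

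\emph{First claim.} Let $X$ be a UVC-space. Condition (1) of Definition \ref{uvc-space} says $\mathcal{K}\Omega(X)$ is closed under $\cap$ and $^{\perp}$. It also contains $\emptyset$ and $X$: we have $\emptyset\in\mathcal{K}(X)$, and $X=\emptyset^{\perp}$. Since $\mathcal{K}\Omega(X)$ is a basis, $X$ is open. Compactness of $X$ is the delicate point. I would obtain it from condition (3) applied to the improper-filter case, or else argue that $X=(\emptyset)^{\perp}$ lies in $\mathcal{K}\Omega(X)$ by closure under $^{\perp}$, once $\emptyset\in\mathcal{K}\Omega(X)$ is known.

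Joins come from De Morgan: $U\sqcup V=(U^{\perp}\cap V^{\perp})^{\perp}$, and this lies in $\mathcal{K}\Omega(X)$.

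Now set $\Omega'(X):=\mathcal{K}\Omega(X)$. This is a subfamily of $\Omega(X)\subseteq\mathcal{B}(X)$ closed under $\cap$ and $^{\perp}$. The $c$-frame condition is universally quantified over pairs $V\subseteq U$ in $\Omega(X)$, so it restricts to pairs in $\Omega'(X)$. Hence $\langle X;\perp,\Omega'(X)\rangle$ is again a $c$-frame, and Lemma \ref{c-frame to poml} gives that $\langle\Omega'(X);\cap,\sqcup,^{\perp},\emptyset,X\rangle$ is a pOML, which is exactly $\mathcal{G}(X)$.

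\emph{Second claim.} Let $A$ be a pOML. Lemma \ref{poml to c-frame} shows that $\langle X_A;\perp_A,\beta_A\rangle$ is a $c$-frame, and Lemma \ref{spectral space} shows the topology is spectral, in particular $T_0$. By Theorem \ref{isomorphism} and its proof, $\mathcal{K}\Omega(\mathcal{F}(A))=\beta_A=\{\phi(a):a\in A\}$. We now check the three conditions of Definition \ref{uvc-space}.

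For (1), $\phi(a)\cap\phi(b)=\phi(a\wedge b)$, since filters are closed upward and under meets. The identity $\phi(a)^{\perp}=\phi(a^{\perp})$ is the standard Goldblatt fact. The inclusion $\supseteq$ follows from the definition of $\perp_A$. For $\subseteq$: if $a^{\perp}\notin x$, then $x\not\perp_A{\uparrow}a$, since $b\in{\uparrow}a$ and $b^{\perp}\in x$ would give $b^{\perp}\leq a^{\perp}$, forcing $a^{\perp}\in x$. The case $a=0$ is handled separately. That $\beta_A$ is a basis holds by definition of $\tau_A$.

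For (2), if $x\perp_A y$, pick $a\in x$ with $a^{\perp}\in y$. Then $x\in\phi(a)$ and $y\in\phi(a^{\perp})=\phi(a)^{\perp}$.

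For (3), a proper filter $\mathcal{P}$ of $\beta_A$ pulls back along the isomorphism $\phi$ to a proper filter $x=\{a:\phi(a)\in\mathcal{P}\}$ of $A$. It is nonempty because $\phi(1)=X_A$. Then $\mathcal{K}\Omega_{X_A}(x)=\{\phi(a):a\in x\}=\mathcal{P}$.

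The main obstacle is the bookkeeping in the first claim. One must ensure $\mathcal{G}(X)$ uses the restricted family $\mathcal{K}\Omega(X)$ as the ``$\Omega$'' of a $c$-frame, and must confirm that $\emptyset$ and $X$ are compact open members. Both points should follow from (1) and $T_0$ with the conventions of \cite{mcdonald}, where the analogous OL statement is proved and can be cited for the shared parts.
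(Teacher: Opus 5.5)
Your proposal follows essentially the paper's route. For the first half you restrict the $c$-frame structure to $\mathcal{K}\Omega(X)$ and invoke Lemma \ref{c-frame to poml}; the paper does the same, citing \cite[Lemma 4.1.2]{mcdonald} for the closure properties. For the second half you verify conditions 1--3 of Definition \ref{uvc-space} exactly as the paper does, with more detail on $\phi(a)^{\perp}=\phi(a^{\perp})$.

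The one loose end is showing $\emptyset,X\in\mathcal{K}\Omega(X)$, which you leave unresolved. Condition 3 says nothing about the improper filter, so that option is unavailable. Also, $\emptyset\in\mathcal{K}(X)$ does not by itself put $\emptyset$ in $\Omega(X)$.

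The fix uses irreflexivity of $\perp$, not $T_0$. Take any $U\in\mathcal{K}\Omega(X)$; one exists because this family is a basis of $X\neq\emptyset$. Then $U\cap U^{\perp}=\emptyset$, so $\emptyset\in\mathcal{K}\Omega(X)$ by condition 1. Hence $X=\emptyset^{\perp}\in\mathcal{K}\Omega(X)$, which also settles the compactness of $X$.
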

\begin{proof}
    The first part follows by Lemma \ref{c-frame to poml} and \cite[Lemma 4.1.2]{mcdonald}. For the second part, by Lemma \ref{spectral space} and Lemma \ref{poml to c-frame}, it suffices to show that conditions 1-3 of Definition \ref{uvc-space} are satisfied. For condition 1, it follows by Theorem \ref{isomorphism} that $\phi(a)\cap\phi(b)=\phi(a\wedge b)\in\mathcal{K}\Omega(\mathcal{F}(A))$ and $\phi(a)^{\perp}=\phi(a^{\perp})\in\mathcal{K}\Omega(\mathcal{F}(A))$. Moreover, the definition of $\beta_A$ and $\tau_A$ guarantees that sets of the form $\phi(a)$ for $a\in A$ are a basis for $\mathcal{F}(A)$. For condition 2, assume $x\perp_Ay$. Then there exists some $a\in A$ such that $a\in x$ and $a^{\perp}\in y$. Therefore we have $x\in\phi(a)$ and $y\in\phi(a^{\perp})=\phi(a)^{\perp}$. For condition 3, take any proper filter $F$ in $\mathcal{K}\Omega(\mathcal{F}(A))$. Theorem \ref{isomorphism} gives that $x=\{a\in A:\phi(a)\in F\}$ is a proper filter in $A$. Therefore $x\in\mathcal{F}(A)$ and $\mathcal{K}\Omega_{\mathcal{F}(A)}(x)=F$, as desired.    
\end{proof}
\begin{theorem}\label{homeomorphism}
    Every UVC-space $X$ is homeomorphic and relationally isomorphic to $\mathcal{F}(\mathcal{G}(X))$. Moreover every UVC-space is a spectral space.  
\end{theorem}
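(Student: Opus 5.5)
The map I would use is $\varepsilon\colon X\to X_{\mathcal{G}(X)}$, $\varepsilon(x)=\mathcal{K}\Omega_X(x)$. I will show that $\varepsilon$ is a bijection which preserves and reflects $\perp$ and is a homeomorphism. Since $\mathcal{F}(\mathcal{G}(X))$ is spectral by Lemma \ref{spectral space}, and spectrality is invariant under homeomorphism, the final claim then follows immediately.

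\textbf{Well-definedness, injectivity and surjectivity.} $\mathcal{K}\Omega_X(x)$ is closed under $\cap$ and upward closed, so it is a filter of $\mathcal{G}(X)$. It contains $X$: condition 1 of Definition \ref{uvc-space} gives $X=\emptyset^{\perp}\in\mathcal{K}\Omega(X)$, using that $\emptyset$ is the empty union of basic sets. It does not contain $\emptyset$, so $\varepsilon(x)$ is a non-empty proper filter.

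For injectivity, suppose $x\neq y$. The $T_0$ axiom gives an open set containing exactly one of them, and since $\mathcal{K}\Omega(X)$ is a basis, some $U\in\mathcal{K}\Omega(X)$ also separates them. Hence $\varepsilon(x)\neq\varepsilon(y)$.

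Surjectivity is exactly condition 3 of Definition \ref{uvc-space}.

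\textbf{Relational isomorphism.} First suppose $x\perp y$. Condition 2 gives $U\in\mathcal{K}\Omega(X)$ with $x\in U$ and $y\in U^{\perp}$. Then $U\in\varepsilon(x)$ and $U^{\perp}\in\varepsilon(y)$, so $\varepsilon(x)\perp_{\mathcal{G}(X)}\varepsilon(y)$.

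Conversely, suppose some $U$ satisfies $U\in\varepsilon(x)$ and $U^{\perp}\in\varepsilon(y)$. Then $x\in U$ and $y\in U^{\perp}$, so $y\perp x$ by the definition of $U^{\perp}$, and $x\perp y$ by symmetry.

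\textbf{Homeomorphism.} For each $U\in\mathcal{K}\Omega(X)$,
\[\varepsilon[U]=\{F\in X_{\mathcal{G}(X)}:U\in F\}=\phi(U).\]
The inclusion $\subseteq$ is immediate. For $\supseteq$, surjectivity writes any $F$ with $U\in F$ as $\varepsilon(x)$, and then $x\in U$.

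Since $\varepsilon$ is a bijection, it therefore maps the basis $\mathcal{K}\Omega(X)$ of $X$ onto the basis $\beta_{\mathcal{G}(X)}$ of $\tau_{\mathcal{G}(X)}$. A bijection that carries a basis onto a basis is a homeomorphism.

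\textbf{Main obstacle.} The delicate point is the bookkeeping about what the filters of $\mathcal{G}(X)$ are. The order on $\mathcal{G}(X)$ is inclusion and its meet is $\cap$, so the filters in condition 3 coincide with those used in defining $X_{\mathcal{G}(X)}$. One must also check that "proper" in condition 3 means non-empty and not containing $\emptyset$, matching the definition of $X_A$. Once these conventions are aligned, the remaining steps are direct; this is presumably also how \cite[Theorem 4.1.3]{mcdonald} handles the ortholattice case, which I would cite for these routine verifications.
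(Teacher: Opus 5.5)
Your proof is correct and follows the paper's own argument: the same map $x\mapsto\mathcal{K}\Omega_X(x)$, injectivity from $T_0$ plus the basis, surjectivity from condition 3, $\perp$-preservation from condition 2 and symmetry, the identity $\varepsilon[U]=\phi(U)$ (which the paper writes, with a typo, as $\psi[\phi(U)]=\phi(U)$), and spectrality transferred from Lemma \ref{spectral space}. One small fix: writing $\emptyset$ as an empty union of basic sets only shows $\emptyset$ is open, not that $\emptyset\in\Omega(X)$. Instead, take any basic $U$ and note $U\cap U^{\perp}=\emptyset$ by irreflexivity, so $\emptyset$ and hence $X=\emptyset^{\perp}$ lie in $\mathcal{K}\Omega(X)$ by condition 1; alternatively, this already follows from Lemma \ref{functors}.
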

\begin{proof}
    We show that the map $\psi(x)=\{U\in\mathcal{K}\Omega(X):x\in U\}$ exhibits the desired homeomorphism and relational isomorphism. It is an easy exercise to verify that $\psi(x)$ is a proper filter in $\mathcal{G}(X)$ for any $x\in X$ so that $\psi$ is well-defined. To see that $\psi$ is injective, note that since $\mathcal{K}\Omega(X)$ forms a basis for $X$, it follows that if $x\not\leqslant y$, there exists $U\in\mathcal{K}\Omega(X)$ such that $x\in U$ and $y\not\in Y$. The definition of $\psi$ then gives $U\in\psi(x)$ and $U\not\in\psi(y)$ so $\psi(x)\not\leqslant\psi(y)$. For surjectivity, take any $F\in\mathcal{F}(\mathcal{G}(X))$. By Definition \ref{uvc-space}(3), there exists some $x\in X$ such that $\mathcal{K}\Omega_X(x)=F$ and hence $\psi(x)=F$. To see $\psi$ is continuous, take any basic open $\phi(U)$ in $\mathcal{F}(\mathcal{G}(X))$, then 
    \begin{align*}
        \psi^{-1}[\phi(U)]&=\{x\in X:\mathcal{K}\Omega_X(x)\in\phi(U)\}\\&=\{x\in X:U\in\mathcal{K}\Omega_X(x)\}\\&=\{x\in X:x\in U\}\\&=U
    \end{align*}
   The continuity of $\psi^{-1}$ is calculated by noting that 
 \[
       \psi[\phi(U)]=\{\mathcal{K}\Omega_X(x):x\in U\}=\{\mathcal{K}\Omega_X(x):U\in\mathcal{K}\Omega_X(x)\}=\phi(U)
\]
 Lastly, it can be shown that $x\perp y$ iff $\psi(x)\perp\psi(y)$ by Definition \ref{uvc-space}(2) and that $x\leqslant y$ iff $\psi(x)\subseteq\psi(y)$ since $x\not\leqslant y$ implies there exists $U\in\mathcal{K}\Omega(X)$ such that $x\in U$ and $y\not\in U$ for any UVC-space. Hence $\psi$ is a relational isomorphism. Thus, it follows by Lemma \ref{spectral space} and Lemma \ref{functors} that $X$ is a spectral space.     
\end{proof}
 
\begin{definition}\label{uvc-map}
    Let $X$ and $Y$ be UVC-spaces. A map $f\colon X\to Y$ is a \emph{spectral c-frame map} provided: 1.) $f^{-1}[U]\in\mathcal{K}(X)$ for $U\in\mathcal{K}(Y)$, 2.) if $x\not\perp y$, then $f(x)\not\perp f(y)$, and 3.) if $z\not\perp f (y)$, there is $x\in X$ with $x\not\perp y$ and $z\leqslant f(x)$.

\end{definition}

    By $\mathbf{pOML}$ we denote the category of pOMLs and homomorphisms. By $\mathbf{UVC}$ we denote the category of UVC-spaces and spectral c-frame maps. 

\begin{theorem}
    $\mathbf{pOML}$ is dually equivalent to $\mathbf{UVC}$. 
\end{theorem}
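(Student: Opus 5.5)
We will extend $\mathcal{F}$ and $\mathcal{G}$ to contravariant functors and show that the maps $\phi$ of Theorem \ref{isomorphism} and $\psi$ of Theorem \ref{homeomorphism} are natural isomorphisms. Where possible we reuse the corresponding steps of the OL duality in \cite{mcdonald}, since $\mathbf{pOML}$ is a full subcategory of OLs and every UVC-space is in particular an upper Vietoris orthospace.

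\emph{Functor on morphisms.} For a pOML homomorphism $h\colon A\to B$, set $\mathcal{F}(h)=h^{-1}[-]\colon X_B\to X_A$.
\begin{itemize}
\item Preimages of proper filters under a bounded lattice homomorphism are proper filters, so $\mathcal{F}(h)$ is well-defined.
\item Condition 1 of Definition \ref{uvc-map} follows from $\mathcal{F}(h)^{-1}[\phi(a)]=\phi(h(a))$. Compact opens of $\mathcal{F}(A)$ are finite unions of basic sets $\phi(a)$, so their preimages are finite unions of the sets $\phi(h(a))$, which are compact opens.
\item Condition 2: if $h^{-1}[x]\perp_A h^{-1}[y]$ via some $a$, then $h(a)\in x$ and $h(a)^{\perp}=h(a^{\perp})\in y$. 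Hence $x\perp_B y$.
\item Condition 3 is the back condition. Given a filter $z$ of $A$ with $z\not\perp h^{-1}[y]$, we must produce a filter $x$ of $B$ with $x\not\perp y$ and $z\subseteq h^{-1}[x]$. The natural choice is the filter $x$ of $B$ generated by $h[z]$. We must check that it is proper and not orthogonal to $y$. This is exactly the argument of the corresponding lemma for OLs in \cite{mcdonald}, which we invoke, since $h$ is in particular an OL homomorphism.
\end{itemize}

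For a spectral c-frame map $f\colon X\to Y$, set $\mathcal{G}(f)=f^{-1}[-]\colon\mathcal{K}\Omega(Y)\to\mathcal{K}\Omega(X)$. Condition 1 gives compactness and openness of preimages of compact opens. It remains to see that $f^{-1}$ maps $\Omega(Y)$ into $\Omega(X)$ and preserves $\cap$, $^{\perp}$ and $\sqcup$; preservation of $\sqcup$ then follows from $\cap$ and $^{\perp}$ by De Morgan. Preservation of $\cap$ is trivial.

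\emph{Main obstacle: preservation of $^{\perp}$.} We need to show $f^{-1}[U^{\perp}]=f^{-1}[U]^{\perp}$.
\begin{itemize}
\item For $\subseteq$: let $f(x)\in U^{\perp}$ and $y\in f^{-1}[U]$. If $x\not\perp y$, condition 2 gives $f(x)\not\perp f(y)$, contradicting $f(y)\in U$.
\item For $\supseteq$: suppose $x\in f^{-1}[U]^{\perp}$ but $f(x)\notin U^{\perp}$. Then some $z\in U$ has $z\not\perp f(x)$. Condition 3 gives $w\not\perp x$ with $z\leqslant f(w)$. Since $U$ is open and $z\in U$, the specialization order gives $f(w)\in U$. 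So $w\in f^{-1}[U]$ and $w\not\perp x$, a contradiction.
\end{itemize}
Note that this step uses that elements of $\mathcal{K}\Omega(Y)$ are open, i.e.\ upsets in the specialization order. This also shows that $f^{-1}[U]$ is biorthogonally closed, and since $f^{-1}[U]$ is compact open it lies in $\mathcal{K}\Omega(X)$; at the least we need the analogue of Lemma 4.1.2 of \cite{mcdonald} giving $\mathcal{K}\Omega(X)=\mathcal{K}(X)\cap\mathcal{B}(X)$ in this setting. Functoriality, i.e.\ preservation of identities and composition, is immediate for preimage maps.

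\emph{Naturality and conclusion.} For $\phi$ we check $\phi_B(h(a))=\mathcal{F}(h)^{-1}[\phi_A(a)]$, which is the computation already made above. For $\psi$ we check $\psi_Y(f(x))=\{U: x\in f^{-1}[U]\}=\mathcal{G}(f)^{-1}[\psi_X(x)]$, which holds by unwinding definitions. We also need $\psi$ and $\psi^{-1}$ to be morphisms of $\mathbf{UVC}$, i.e.\ spectral c-frame maps. Theorem \ref{homeomorphism} gives that $\psi$ is a homeomorphism that preserves and reflects $\perp$ and $\leqslant$, so conditions 1--3 hold in both directions; for condition 3 take $x$ to be the preimage of the given point. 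Combined with Theorems \ref{isomorphism} and \ref{homeomorphism} and Lemma \ref{functors}, $\phi$ and $\psi$ are natural isomorphisms $1\cong\mathcal{G}\mathcal{F}$ and $1\cong\mathcal{F}\mathcal{G}$, yielding the dual equivalence. No step uses choice, since every filter used is constructed explicitly.
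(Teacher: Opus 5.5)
Your proposal is correct and follows essentially the same route as the paper: preimage maps as the functors on morphisms, preservation of $^{\perp}$ from conditions 2 and 3 of Definition \ref{uvc-map} (using that opens are upsets in the specialization order), and $\phi,\psi$ as the natural isomorphisms. Your explicit naturality checks and your verification that $\psi$ is an isomorphism in $\mathbf{UVC}$ spell out what the paper leaves as routine.

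The one point you leave open, $\mathcal{K}\Omega(X)=\mathcal{K}(X)\cap\mathcal{B}(X)$ for a UVC-space, follows at once from condition 1 of Definition \ref{uvc-space} and compactness. A compact open $W$ with $W=W^{\perp\perp}$ is a finite union $U_1\cup\dots\cup U_n$ of basic sets $U_i\in\mathcal{K}\Omega(X)$, so $W=(U_1^{\perp}\cap\dots\cap U_n^{\perp})^{\perp}\in\mathcal{K}\Omega(X)$.
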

\begin{proof}
    Let $X$ and $Y$ be UVC-spaces and $f\colon X\to Y$ be a spectral c-frame map. It is easily verified that $f^{-1}[U\cap V]=f^{-1}[U]\cap f^{-1}[V]$ and $f^{-1}[\emptyset]=\emptyset$ for any $U,V\in\mathcal{K}\Omega(X)$. Moreover, by conditions 2 and 3 of Definition \ref{uvc-map}, one can show that $f^{-1}[U^{\perp}]=f^{-1}[U]^{\perp}$ and hence $\mathcal{G}\colon f\mapsto f^{-1}$ together with the construction given in Lemma \ref{functors} determines a contravariant functor $\mathcal{G}\colon\mathbf{UVC}\to\mathbf{pOML}$. Now let $A$ and $B$ be pOMLs and let $h\colon A\to B$ be a homomorphism. We show that $\mathcal{F}\colon h\mapsto h^{-1}$ determines a spectral c-frame map from $\mathcal{F}(B)$ to $\mathcal{F}(A)$. To see that $\mathcal{F}(h)$ is spectral, take any $\phi(a)\in\mathcal{K}(\mathcal{F}(A))$, then 
    \begin{align*}
        \mathcal{F}(h)^{-1}[\phi(a)]&=(h^{-1})^{-1}[\phi(a)]\\&=\{x\in X_B:h^{-1}[x]\in\phi(a)\}\\&=\{x\in X_B:a\in h^{-1}[x]\}\\&=\{x\in X_B:h(a)\in x\}\\&=\phi(h(a))
    \end{align*}
 Since $\phi(h(a))\in\mathcal{K}(\mathcal{F}(B))$, it follows that $\mathcal{F}(h)$ is a spectral map from $\mathcal{F}(B)$ to $\mathcal{F}(A)$. Conditions 2 and 3 of Definition \ref{uvc-map} follow from \cite[Lemma 3.9]{bimbo}. This, together with the construction given in Lemma \ref{functors}, determines a contravariant functor $\mathcal{F}\colon\mathbf{pOML}\to\mathbf{UVC}$. A routine argument shows that $\mathcal{G}\colon\text{Hom}_{\mathbf{UVC}}(X,Y)\to\text{Hom}_{\mathbf{pOML}}(\mathcal{G}(Y),\mathcal{G}(X))$ and $\mathcal{F}\colon\text{Hom}_{\mathbf{pOML}}(A,B)\to\text{Hom}_{\mathbf{UVC}}(\mathcal{F}(B),\mathcal{F}(A))$ are fully faithful. Lastly the constructions of $\mathcal{G}$ and $\mathcal{F}$ together with Theorem \ref{isomorphism} and Theorem \ref{homeomorphism} provide natural isomorphisms $\phi\colon\mathbf{id}_{\mathbf{pOML}}\to\mathcal{G}\circ\mathcal{F}$ and $\psi\colon\mathbf{id}_{\mathbf{UVC}}\to\mathcal{F}\circ\mathcal{G}$ where $\mathbf{id}_{\mathbf{pOML}}$ and $\mathbf{id}_{\mathbf{UVC}}$ are the identity functors.  
\end{proof}
\noindent\textbf{Ethical approval, Data availability, Conflict of interest} Not applicable.

\noindent\textbf{Funding.} This work has been funded by a grant from the Programme Johannes Amos Comenius
under the Ministry of Education, Youth and Sports of the Czech Republic,
CZ.02.01.01/00/23$_{-}$025/0008711.


\begin{thebibliography}{99}
\bibitem{bez} Bezhanishvili, N., Holliday, W.: Choice-free Stone duality. \emph{J. Symb. Log.}, \textbf{85}, 109--148 (2020). 
\bibitem{bimbo}Bimb{\'o}, K.: Functorial duality for ortholattices and De Morgan lattices. \emph{Log. Univers.} \textbf{1}, 311--333 (2007) 
\bibitem{leem} Leemhuis, M., Wolter, D., Özçep, Ö.L.: Rules of Partial Orthomodularity. In: Metcalfe, G., Studer, T., de Queiroz, R. (eds) Logic, Language, Information, and Computation. WoLLIC 2024. Lecture Notes in Computer Science, vol 14672., 108--121, Springer, Cham. (2024)
\bibitem{goldblatt1}Goldblatt, R.I., The Stone space of an ortholattice. Bull. Lond. Math. Soc. \textbf{7}, 45--48 (1975) 
\bibitem{goldblatt2}Goldblatt, R.I., A semantic analysis of orthologic. J. Phil. Log. \textbf{3}, 19--35 (1974)
\bibitem{mcdonald} McDonald, J., Yamamoto, K.: Choice-free duality for orthocomplemented lattices by means of spectral spaces. \emph{Algebra Universalis}, vol. \textbf{83} (2022)
\end{thebibliography}
\end{document}